\theoremstyle{plain}
\newtheorem{thm}{Theorem}[section]
\newtheorem{theorem}[thm]{Theorem}
\theoremstyle{definition}
\newtheorem{definition}[thm]{Definition}
\newtheorem{remark}[thm]{Remark}
\newtheorem{thevarthm}[thm]{\varthmname}
\newenvironment{varthm*}[1]{\trivlist\item[]{\bf #1.}\it}{\endtrivlist}
\newcommand\be{\begin{eqnarray*}}
\newcommand\ee{\end{eqnarray*}}
\renewcommand\P{\mathbb P}
\newcommand\newop[2]{\def#1{\mathop{\rm #2}\nolimits}}
\newop\edim{edim}
\newop\Zeroes{Zeroes}
\newop\Jac{Jac}
\newop\Ass{Ass}
\newop\SL{SL}
\newop\PGL{{\P}GL}
\newop\Km{Km}
\newop\reg{reg}
\newcolumntype{L}{>{$}l<{$}} 
\newcommand{\slfrac}[2]{\left.#1\middle/#2\right.}
\newcommand\keywords[1]{{\renewcommand\thefootnote{}\footnotetext{\textit{Keywords:} #1.}}}
\newcommand\subclass[1]{{\renewcommand\thefootnote{}\footnotetext{\textit{Mathematics Subject Classification (2010):} #1.}}}
\begin{document}

\author{Paulina Wi\'sniewska and Maciej Zi\k{e}ba}
\title{Generic projections of the $\mathrm{H}_{4}$ configuration of points}
\date{\today}
\maketitle
\thispagestyle{empty}

\begin{abstract}
In the present note we give an example of a finite set of points in $\mathbb{P}^{3}$ which has the so-called geproci property, but it is neither a grid nor a half-grid.
This answers a question on the existence of such sets rised by Pokora, Szemberg and Szpond in \cite[Problem 5.9]{MFOpreprint}.
\end{abstract}
\keywords{base loci, cones, complete intersections, geproci sets of points, special linear systems, unexpected hypersurfaces}
\subclass{MSC 14C20 \and MSC 14N20 \and MSC 13A15}

\section{Introduction}
The purpose of this note is to study properties of the set of $60$ points in $\P^3$ determined by the $\mathrm{H}_4$ root system. Our motivation comes from several directions. In \cite{ChiantiniMigliore19}, the authors observed that grids in $\P^3$ have the property that their general projection to a plane is a complete intersection. Such sets are said (after Definition 5.1 in \cite{MFOpreprint}) to have the \emph{geproci} property (from: \textbf{ge}neral \textbf{pro}jection \textbf{c}omplete \textbf{i}ntersection). We recall the fundamental Definition 3.2 from \cite{ChiantiniMigliore19}.
\begin{definition}[Grid]
   Let $a$ and $b$ be positive integers. A set $Z$ of $a\cdot b$ points in $\P^3$ is an $(a,b)-$\emph{grid} if there exist two sets of lines
   $L_1,\ldots,L_a$ and $M_1,\ldots,M_b$ such that 
   \begin{itemize}
       \item lines in each of the sets are pairwise skew;
       \item each pair of lines, one from one set and one from the other intersect in a point of $Z$.
   \end{itemize} 
   Thus
   $$Z=\left\{L_i\cap M_j,\; i=1,\ldots,a,\; j=1,\ldots,b\right\}.$$
\end{definition}
   A general projection $\pi$ of an $(a,b)$--grid to a hyperplane $H\subset\P^3$ is a complete intersection. Indeed, the ideal of $\pi(Z)$ in $H$ is generated by the equations of $C=\pi(L_1\cup\ldots\cup L_a)$ and $D=\pi(M_1\cup\ldots\cup M_b)$.
   
In Appendix to \cite{ChiantiniMigliore19}, it was observed that not only grids have the geproci property. The authors discovered that the set of $24$ points in $\P^3$ determined by the $F_4$ root system does not form a grid, yet it has the geproci property. More precisely, its general projection is a complete intersection of a smooth curve of degree $4$ and a curve of degree $6$, which can be chosen to split totally in $6$ lines.

The curve of degree $4$ in contrary is uniquely determined by the projection of $F_4$ and does not split into lines. In this situation, we speak of a half-grid. This notion was introduced in \cite{MFOpreprint} by Pokora, Szpond and Szemberg. They found a set of $60$ points in $\P^3$, which, like $F_4$, is a half-grid rather than a grid.


\begin{definition}[Half-grid]
   Let $a$ and $b$ be positive integers. A set $Z$ of $a\cdot b$ points in $\P^3$ is an $(a,b)$-\emph{half-grid} if there exists a set of mutually skew lines $L_1,\ldots,L_a$ covering $Z$ and a general projection of $Z$ to
   a hyperplane is a complete intersection of images of the lines with a (possibly reducible) curve of degree $b$. 
\end{definition}
The sets with the geproci property studied in \cite{ChiantiniMigliore19} ($F_4$ and its subsets) and \cite{MFOpreprint} (Klein configuration and its subsets) are half-grids. It is natural to wonder if all geproci sets of points in $\P^3$ are half-grids. Establishing such a fact would provide a major step towards the classification of all geproci sets of points in $\P^3$. Somewhat disappointingly, we show that this is not the case. Our main result is thus the following.
\begin{varthm*}{Main Theorem}
   There exists a finite set of points in $\P^3$ which has the geproci property and which is not a half-grid.
\end{varthm*}
In the course of the proof of the Main Theorem we will see however that collinear subsets of points still play a prominent role.
\section{The $\mathrm{H}_{4}$ configuration of points}
\label{sec: H4 configuration}
In this section, we introduce the main object of our interest which is a highly symmetric set of $60$ points in $\P^3$. We discuss linear flats determined by this set and we indicate incidences among these flats that are relevant from our viewpoint.

The set of $60$ points that we are interested in comes from the $\mathrm{H}_{4}$ root system. In specific coordinates it can realized as follows: 
\begin{equation*}
\begin{tabular}{L L L}
P_{1} = \left[1:0:0:0 \right]   &
P_{2} = \left[0:1:0:0 \right]   &
P_{3} = \left[0:0:1:0 \right]\\
P_{4} = \left[0:0:0:1 \right]   &
P_{5} = \left[1:1:1:1 \right]   &
P_{6} = \left[1:1:1:-1 \right]\\

P_{7} = \left[1:1:-1:1 \right]   &
P_{8} = \left[1:1:-1:-1 \right]   &
P_{9} = \left[1:-1:1:1 \right]\\
P_{10} = \left[1:-1:1:-1 \right]  & 
P_{11} = \left[1:-1:-1:1 \right]   &
P_{12} = \left[1:-1:-1:-1 \right]\\

P_{13} = \left[0:{\varphi}:{\varphi}^2:1 \right]&
P_{14} = \left[0:{\varphi}:{\varphi}^2:-1 \right]&   
P_{15} = \left[0:{\varphi}:-{\varphi}^2:1 \right]\\

P_{16} = \left[0:{\varphi}:-{\varphi}^2:-1 \right]&   
P_{17} = \left[0:{\varphi}^2:1:{\varphi} \right]   &
P_{18} = \left[0:{\varphi}^2:1:-{\varphi} \right]\\

P_{19} = \left[0:{\varphi}^2:-1:{\varphi} \right] &  
P_{20} = \left[0:{\varphi}^2:-1:-{\varphi} \right]   &  
P_{21} = \left[0:1:{\varphi}:{\varphi}^2 \right]\\

P_{22} = \left[0:1:{\varphi}:-{\varphi}^2 \right]    & 
P_{23} = \left[0:1:-{\varphi}:{\varphi}^2 \right]    & 
P_{24} = \left[0:1:-{\varphi}:-{\varphi}^2 \right]\\

P_{25} = \left[{\varphi}:0:1:{\varphi}^2 \right]    & 
P_{26} = \left[{\varphi}:0:1:-{\varphi}^2 \right]    & 
P_{27} = \left[{\varphi}:0:-1:{\varphi}^2 \right]\\

P_{28} = \left[{\varphi}:0:-1:-{\varphi}^2 \right]    & 
P_{29} = \left[{\varphi}^2:0:{\varphi}:1 \right]    & 
P_{30} = \left[{\varphi}^2:0:{\varphi}:-1 \right]\\

P_{31} = \left[{\varphi}^2:0:-{\varphi}:1 \right]    & 
P_{32} = \left[{\varphi}^2:0:-{\varphi}:-1 \right]    & 
P_{33} = \left[1:0:{\varphi}^2:{\varphi} \right]\\

P_{34} = \left[1:0:{\varphi}^2:-{\varphi} \right]    & 
P_{35} = \left[1:0:-{\varphi}^2:{\varphi} \right]    & 
P_{36} = \left[1:0:-{\varphi}^2:-{\varphi} \right]\\

P_{37} = \left[{\varphi}:{\varphi}^2:0:1 \right]    & 
P_{38} = \left[{\varphi}:{\varphi}^2:0:-1 \right]    & 
P_{39} = \left[{\varphi}:-{\varphi}^2:0:1 \right]\\

P_{40} = \left[{\varphi}:-{\varphi}^2:0:-1 \right]    & 
P_{41} = \left[{\varphi}^2:1:0:{\varphi} \right]    & 
P_{42} = \left[{\varphi}^2:1:0:-{\varphi} \right]\\

P_{43} = \left[{\varphi}^2:-1:0:{\varphi} \right]    & 
P_{44} = \left[{\varphi}^2:-1:0:-{\varphi} \right]    & 
P_{45} = \left[1:{\varphi}:0:{\varphi}^2 \right]\\
P_{46} = \left[1:{\varphi}:0:-{\varphi}^2 \right]    & 
P_{47} = \left[1:-{\varphi}:0:{\varphi}^2 \right]   & 
P_{48} = \left[1:-{\varphi}:0:-{\varphi}^2 \right]\\

P_{49} = \left[{\varphi}:1:{\varphi}^2:0 \right]    & 
 P_{50} = \left[{\varphi}:1:-{\varphi}^2:0 \right]    & 
 P_{51} = \left[{\varphi}:-1:{\varphi}^2:0 \right]\\
 
 P_{52} = \left[{\varphi}:-1:-{\varphi}^2:0 \right]    & 
 P_{53} = \left[{\varphi}^2:{\varphi}:1:0 \right]    & 
 P_{54} = \left[{\varphi}^2:{\varphi}:-1:0 \right]\\
 
 P_{55} = \left[{\varphi}^2:-{\varphi}:1:0 \right]    & 
 P_{56} = \left[{\varphi}^2:-{\varphi}:-1:0 \right]    & 
 P_{57} = \left[1:{\varphi}^2:{\varphi}:0 \right]\\
 
 P_{58} = \left[1:{\varphi}^2:-{\varphi}:0 \right]    & 
 P_{59} = \left[1:-{\varphi}^2:{\varphi}:0 \right]    & 
 P_{60} = \left[1:-{\varphi}^2:-{\varphi}:0 \right]
\end{tabular}
\end{equation*}

\noindent
where $\varphi=\slfrac{(1+\sqrt{5})}{2}=1.61803398875\dots$ means the golden ratio.
Up to a projective change of coordinates, there is just one way to embed $\mathrm{H}_{4}$ into $\P^3$, see \cite{bao}.
This set determines $60$ dual planes $V_1,\ldots,V_{60}$, which we consider in the same $\mathbb{P}^3$. Specifically, for $P_i = \left[a:b:c:d\right]$ we define $V_i=\{ax+by+cz+dw=0\}$.
These points and planes form a symmetric $\left(60_{15}\right)$-configuration of points and planes, which means that  there are $15$ points in each plane and $15$ planes pass through each point. We call the planes $V_i$ the $15$-reach planes to indicate that they contain $15$ points from the $\mathrm{H}_4$ configuration.
Specific incidences between the configuration points and the $15$-reach planes are listed in Table \ref{tab:points_planes_incidences} in which the points are represented by numbers only.

In each of these planes, the $15$ distinguished points form a $\mathrm{H}_3$ configuration. This is indicated in Figure \ref{fig:plane}, for the plane $\mathrm{H}_4$, in affine coordinates $(x,y)$, where $\{z=0\}$ is the line at the infinity.
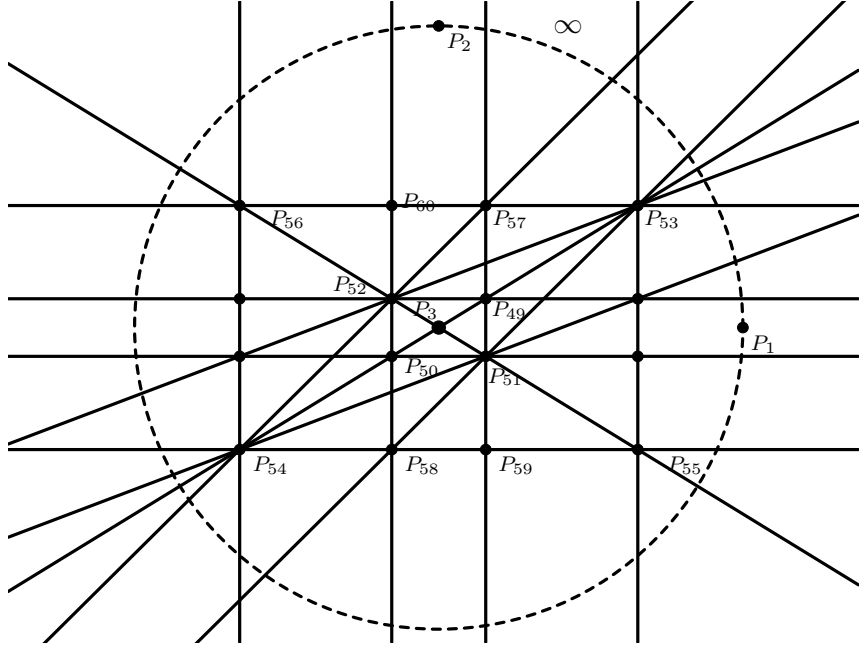
\begin{figure}[h]
    \centering

\begin{tikzpicture}[line cap=round,line join=round,>=triangle 45,x=1.0cm,y=1.0cm]
\clip(-5.664071564399751,-4.174562872034968) rectangle (5.520283925047641,4.320970581586193);
\draw [line width=1.2pt,domain=-5.664071564399751:5.520283925047641] plot(\x,{(-0.--2.*\x)/3.23606797749979});
\draw [line width=1.2pt,domain=-5.664071564399751:5.520283925047641] plot(\x,{(-0.-0.3819660112501051*\x)/0.6180339887498949});
\draw [line width=1.2pt] (0.6180339887498949,-4.174562872034968) -- (0.6180339887498949,4.320970581586193);
\draw [line width=1.2pt] (-0.6180339887498949,-4.174562872034968) -- (-0.6180339887498949,4.320970581586193);
\draw [line width=1.2pt,domain=-5.664071564399751:5.520283925047641] plot(\x,{(--3.23606797749979-0.*\x)/2.});
\draw [line width=1.2pt,domain=-5.664071564399751:5.520283925047641] plot(\x,{(-3.23606797749979-0.*\x)/2.});
\draw [line width=1.2pt,domain=-5.664071564399751:5.520283925047641] plot(\x,{(--2.--2.*\x)/2.});
\draw [line width=1.2pt,domain=-5.664071564399751:5.520283925047641] plot(\x,{(-1.2360679774997898--1.2360679774997898*\x)/1.2360679774997898});
\draw [line width=1.2pt,domain=-5.664071564399751:5.520283925047641] plot(\x,{(-0.4721359549995794-0.*\x)/1.2360679774997898});
\draw [line width=1.2pt,domain=-5.664071564399751:5.520283925047641] plot(\x,{(--0.4721359549995794-0.*\x)/1.2360679774997898});
\draw [line width=1.2pt] (2.618033988749895,-4.174562872034968) -- (2.618033988749895,4.320970581586193);
\draw [line width=1.2pt] (-2.618033988749895,-4.174562872034968) -- (-2.618033988749895,4.320970581586193);
\draw [line width=1.2pt,domain=-5.664071564399751:5.520283925047641] plot(\x,{(--1.2360679774997896--0.7639320225002102*\x)/2.});
\draw [line width=1.2pt,domain=-5.664071564399751:5.520283925047641] plot(\x,{(-2.--1.2360679774997898*\x)/3.23606797749979});
\draw [line width=1.2pt,dash pattern=on 3pt off 3pt] (0.,0.) circle (4.0cm);
\draw (1.3726329123369714,4.2) node[anchor=north west] {$\infty$};

\begin{scriptsize}
\draw [fill=black] (0.6180339887498949,0.3819660112501051) circle (2.0pt);

\draw [fill=black] (-0.6180339887498949,-0.3819660112501051) circle (2.0pt);

\draw [fill=black] (0.6180339887498949,-0.3819660112501051) circle (2.0pt);

\draw [fill=black] (-0.6180339887498949,0.3819660112501051) circle (2.0pt);

\draw [fill=black] (2.618033988749895,1.618033988749895) circle (2.0pt);

\draw [fill=black] (-2.618033988749895,-1.618033988749895) circle (2.0pt);

\draw [fill=black] (2.618033988749895,-1.618033988749895) circle (2.0pt);

\draw [fill=black] (-2.618033988749895,1.618033988749895) circle (2.0pt);

\draw [fill=black] (0.6180339887498949,1.618033988749895) circle (2.0pt);

\draw [fill=black] (-0.6180339887498949,-1.618033988749895) circle (2.0pt);

\draw [fill=black] (0.6180339887498949,-1.618033988749895) circle (2.0pt);

\draw [fill=black] (-0.6180339887498949,1.618033988749895) circle (2.0pt);

\draw [fill=black] (0.,0.) circle (2.5pt);

\draw [fill=black] (-2.618033988749895,-0.38196601125010504) circle (2.0pt);

\draw [fill=black] (-2.618033988749895,0.38196601125010504) circle (2.0pt);

\draw [fill=black] (2.6180339887498945,0.3819660112501051) circle (2.0pt);

\draw [fill=black] (2.618033988749895,-0.38196601125010504) circle (2.0pt);

\draw [fill=black] (0,4) circle (2.0pt);

\draw [fill=black] (4,0) circle (2.0pt);

\draw[color=black,anchor=north west] (0.59,0.45) node {$P_{49}$};
\draw[color=black,anchor=north west] (-0.56,-0.31) node {$P_{50}$};
\draw[color=black,anchor=north west] (0.54,-0.43) node {$P_{51}$};
\draw[color=black,anchor=north west] (-1.5,0.78) node {$P_{52}$};
\draw[color=black,anchor=north west] (-0.45,0.45) node {$P_{3}$};
\draw[color=black,anchor=north west] (2.6,1.66) node {$P_{53}$};
\draw[color=black,anchor=north west] (-2.55,-1.6) node {$P_{54}$};
\draw[color=black,anchor=north west] (2.9,-1.6) node {$P_{55}$};
\draw[color=black,anchor=north west] (-2.33,1.66) node {$P_{56}$};
\draw[color=black,anchor=north west] (0.6,1.66) node {$P_{57}$};
\draw[color=black,anchor=north west] (-0.5528391117378729,-1.6) node {$P_{58}$};
\draw[color=black,anchor=north west] (0.68,-1.6) node {$P_{59}$};
\draw[color=black,anchor=west] (-0.6,1.66) node {$P_{60}$};

\draw[color=black,anchor=north west] (0,4) node {$P_{2}$};
\draw[color=black,anchor=north west] (4,0) node {$P_{1}$};
\end{scriptsize}
\end{tikzpicture}

    \caption{The $\mathrm{H}_{3}$ configuration of points on one of $60$ distinguished planes (here the $V_4$ plane)}
    \label{fig:plane}
\end{figure}
Additional collinearities, which are of interest to us, determine $72$ lines which contain exactly $5$ configuration points (and which we call $5$-reach lines for this reason). Moreover, $5$ is the maximal number of collinear configuration points.

Through each point of configuration there are exactly six $5$-reach lines passing. The incidences are indicated in Table \ref{tab:points_lines_incidences}, where points are represented again by numbers only. 

Because of the duality between points $P_i$ and planes $V_i$, each of the $5$-reach lines is contained in exactly $5$ of the $15$-reach planes and each of these planes contain exactly six $5$-reach lines. 


\section{The proof of the Main Theorem.}
In this section, we prove the Main Theorem. We begin by showing that the set of $60$ points determined by the $\mathrm{H}_{4}$ root system has the geproci property.
\begin{theorem}\label{thm: 60 is geproci}
   Let $Z$ be the set of $60$ points defined in Section \ref{sec: H4 configuration}. Then a general projection of $Z$ to $\P^2$ is a $(6,10)$ complete intersection.
\end{theorem}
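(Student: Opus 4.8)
The plan is to exploit the classical dictionary between plane curves through the projected set and cones through $Z$. Fix a general point $O\in\P^3$, let $\pi_O\colon\P^3\rationalto\P^2$ be the projection from $O$, and set $\overline{Z}:=\pi_O(Z)$, which is a reduced set of $60$ points for general $O$. Choosing coordinates so that $O=[0:0:0:1]$, a form $F\in\K[x,y,z]$ of degree $d$ vanishing at all sixty points of $Z$ is the same datum as a degree-$d$ cone with vertex $O$ containing $Z$, and also the same datum as a degree-$d$ plane curve through $\overline{Z}$; that is, such $F$ are exactly the elements of the graded piece $[I_{\overline{Z}}]_d$ of the homogeneous ideal of $\overline{Z}$ in $\K[x,y,z]$. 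Thus it suffices to produce a sextic form $F_6$ and a degree-ten form $F_{10}$ vanishing on $\overline{Z}$ with no common factor: then the images $C_6,C_{10}\subset\P^2$ meet in a zero-dimensional scheme of length $6\cdot10=60$ by B\'ezout, and since this scheme contains the reduced set $\overline{Z}$ of the same length, it equals $\overline{Z}$, exhibiting $\overline{Z}=C_6\cap C_{10}$ as a $(6,10)$ complete intersection.

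The two existence statements are of opposite character. The degree-ten form is \emph{expected}: degree-ten forms in three variables span a space of dimension $\binom{12}{2}=66$, and sixty points impose at most sixty conditions, so $\dim[I_{\overline{Z}}]_{10}\ge 66-60=6>0$ irrespective of the position of $\overline{Z}$. The sextic is \emph{unexpected}: degree-six forms in three variables span only $\binom{8}{2}=28$ dimensions, so sixty points in general position leave $[I_{\overline{Z}}]_6=0$; equivalently, forcing a general vertex $O$ on a sextic surface of $\P^3$ costs $56$ conditions, so the expected dimension $\max\{0,\,84-60-56\}$ of sextic cones through $Z$ with vertex $O$ is zero. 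The entire weight of the theorem therefore rests on showing that, for general $O$, the configuration $\mathrm{H}_4$ nevertheless carries an \emph{unexpected} sextic cone, that is, $\dim[I_{\overline{Z}}]_6\ge 1$.

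This is the step I expect to be the real obstacle. Semicontinuity is of no help, since the number of curves through $\overline{Z}$ can only jump upward at special vertices, so a cone found for one convenient $O$ need not persist for general $O$; one must argue for general $O$ directly. I would do so by exploiting the order-$14400$ symmetry group $G$ of $\mathrm{H}_4$: the vanishing of the maximal minors of the $60\times 28$ matrix that evaluates the degree-six monomials at $\overline{Z}$ is a $G$-equivariant condition, which both forces relations among these minors and reduces the rank computation to a size manageable on the explicit coordinates of Section~\ref{sec: H4 configuration}. The seventy-two five-reach lines enter precisely here: each carries five collinear points of $Z$, hence projects to five collinear points of $\overline{Z}$, and it is these forced collinearities that pull the rank down from the generic value $28$ to $27$, so that $\dim[I_{\overline{Z}}]_6=1$.

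It then remains to pass from the existence of the two cones to the complete-intersection conclusion. One may argue by hand that some degree-ten curve avoids $F_6$ as a factor, since the degree-ten forms divisible by the (irreducible) $F_6$ constitute only a $15$-dimensional family of products of $F_6$ with quartics, which is outstripped as soon as $\overline{Z}$ imposes at most $50$ conditions in degree ten. It is cleaner, however, to compute the full Hilbert function of $\overline{Z}$ and find it equal to $1,2,3,4,5,6,6,6,6,6,5,4,3,2,1$, the only non-generic value being the $27$ in degree six established above. This sequence is symmetric, so $\overline{Z}$ is arithmetically Gorenstein; and a Gorenstein zero-dimensional subscheme of $\P^2$, being of codimension two, is a complete intersection, of type $(6,10)$ as the initial degree and the socle degree of the $h$-vector dictate. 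This yields the theorem.
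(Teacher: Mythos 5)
Your framing (cones with vertex $O$ versus plane curves through $\overline{Z}$, plus B\'ezout once two curves without common component are found) matches the paper's, and you correctly isolate the unexpected sextic as a key input --- the paper does not prove it either, but cites \cite[Section 3.7]{HMNT} for the existence of the degree-$6$ cone with general vertex and only verifies smoothness by computer; your equivariant rank-$27$ plan is a research program, not a proof, since you would still have to check that the relevant $28\times 28$ minors vanish identically in the coordinates of the vertex (i.e.\ work over the function field of $O$), which you do not do. The genuinely broken step, however, is the decic. As you note, $\dim[I_{\overline{Z}}]_{10}\ge 6$ does not help because multiples $F_6\cdot[S]_4$ already fill a $15$-dimensional subspace; but neither of your two repairs stands. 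The hypothesis that $\overline{Z}$ imposes at most $50$ conditions in degree ten is every bit as unexpected as the sextic itself (sixty general points impose $60$ conditions), you offer no argument for it, and since $\{\dim[I_{\overline{Z}}]_{10}\ge 16\}$ is a closed condition on $O$ it cannot be certified for \emph{general} $O$ by one numerical sample --- the very semicontinuity trap you flagged in degree six. Worse, the Gorenstein route rests on a false implication: a symmetric $h$-vector (which is what your sequence $1,2,3,4,5,6,6,6,6,6,5,4,3,2,1$ is, not the Hilbert function) does \emph{not} make a zero-dimensional subscheme of $\P^2$ arithmetically Gorenstein; by the Davis--Geramita--Orecchia criterion one also needs the Cayley--Bacharach property. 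For instance, four points with exactly three collinear have symmetric $h$-vector $(1,2,1)$ yet are not a complete intersection. So even granting your (unproved) Hilbert function, the conclusion ``complete intersection'' does not follow as stated.

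The paper avoids all of this by \emph{constructing} the degree-$10$ cone explicitly: $Z$ splits into two $30$-point halves $Z_1$ and $Z_2$, each the union of a $(5,5)$-grid (lying on an explicit quadric) and five further configuration points on one additional $5$-reach line ($\ell_{24}$, resp.\ $\ell_{17}$). For each half, the two products of the five planes spanned by $O$ and the grid lines of either ruling give a pencil of quintic cones through the grid, and the member passing through one point of the extra line is checked to vanish at the remaining four; the union of the two resulting quintic cones $C_5\cup C_5'$ is the required degree-$10$ cone, which cannot share a component with the irreducible sextic for degree reasons. This constructive step --- including the half-grid structure of $Z_1$ and $Z_2$, which is also what shows $Z$ itself is not a half-grid in the Main Theorem --- is the idea missing from your proposal, and without it (or a correct Cayley--Bacharach-type argument carried out for a generic vertex) your proof does not close.
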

\begin{proof}
   Let $P$ be a general point in $\P^3$. Thus, in particular, $P$ is not contained in any of flats described in Section \ref{sec: H4 configuration}. Let
   $$\pi_P:\P^3\dashrightarrow\P^2$$
   be the projection from $P$.
   
   The existence of a unique cone of degree $6$ with vertex at $P$ vanishing along the set $Z$ was established in \cite[Section 3.7]{HMNT}. We checked, using our Singular script, that this cone is smooth apart from its vertex. It implies that $\pi_P(Z)$ is contained in a unique smooth curve $C_6$ of degree $6$. 
   
   Thus, it remains to establish the existence of a curve of degree $10$, not containing $C_6$ as a component, and vanishing at all points of $\pi_P(Z)$. We reveal additional nice properties of the set $Z$.
   
   To begin with please note that the following two sets of lines:
   $$L_1=\ell_1, L_2=\ell_{25}, L_3=\ell_{32}, L_4=\ell_{37}, L_5=\ell_{44},\;\;
   M_1=\ell_2, M_2=\ell_{26}, M_3=\ell_{31}, M_4=\ell_{38}, M_5=\ell_{43}$$
   form a $(5,5)$-grid which consists of $25$ points with the following numbers:
   $$1,5,6,7,8,13,14,15,16,29,30,\ldots,38,41,42,51,52,57,58.$$
   It was established in \cite[Remark 3.4]{ChiantiniMigliore19} that all $(a,b)$-grids with $a,b\geq 3$ are contained in a quadric. In our case the equation of this quadric is the following:
   $$Q_1:\; 2xy-y^2+(\varphi-1)z^2-\varphi w^2=0.$$
   Let $g_i$ be the equation of the plane generated by $L_i$ and $P$ with $i \in \{1, ..., 5\}$. Similarly, denote by $h_i$ the equation of the plane spanned by $M_i$ and $P$ with $i \in \{1, ..., 5\}$.
   Then the products
   $$g=g_1\cdot\ldots\cdot g_5\;\mbox{ and }\; 
   h=h_1\cdot\ldots\cdot h_5$$
   generate a pencil of cones of degree $5$ with vertex at $P$ vanishing at all points of the $(5,5)$-grid. It is easy to check that this pencil has no additional base lines apart of those joining $P$ and the points in the grid. We impose one more vanishing condition by requiring a member of this pencil to vanish at $P_4$. This point is distinguished by the following property: There are $10$ lines passing through $P_4$ which meet the quadric $Q_1$ in two points from the grid. More precisely the points in the grid are grouped by these incidences in pairs as follows:
   $$\{\{5,6\},\{7,8\},\{13,14\},\{15,16\},\{29,30\},\{31,32\},\{33,34\},\{35,36\},\{37,38\},\{41,42\}\}.$$
   There are exactly $4$ more points in the configuration which have the same property, i.e., for each of them there are $10$ lines intersecting $Q_1$ in pairs of points from the grid. These points are: $P_{39}, P_{40}, P_{47}$ and $P_{48}$. Together with $P_4$ these points lie on the line $\ell_{24}$. It is easy to check by computer that the member of the pencil vanishing at $P_4$ vanishes at all other points of the configuration contained in the line $\ell_{24}$. 
   
   Thus in this part of the proof we established the existence of a cone $C_5$ of degree $5$ with vertex at $P$ vanishing at half of the points of the configuration:
   $$Z_1:\; 1,4,5,6,7,8,13,14,15,16,29,30,\ldots,42,47,48,51,52,57,58.$$
   This cone is smooth away of its vertex, hence there exists a smooth curve $\Gamma_5$ vanishing at all points of $\pi_P(Z_1)$.
   These $30$ points are by construction contained in $6$ disjoint lines. One can take, for example, lines from the $L$ set and line $\ell_{24}$, so that we have
   $$Z_1\subset \ell_1\cup\ell_{24}\cup\ell_{25}\cup
   \ell_{32}\cup \ell_{37}\cup \ell_{44}.$$
   This implies that $\pi(Z_1)$ is a $(5,6)$-complete intersection. In particular, $Z_1$ has the geproci property and it is a half-grid!
   
   It is natural to wonder properties does the residual set
   $$Z_2=Z\setminus Z_1$$
   enjoy. In a sense it is surprising that exactly the same as $Z_1$.
   We have a grid determined by lines
   $$L_1'=\ell_{7}, L_2'=\ell_{51}, L_3'=\ell_{60}, L_4'=\ell_{65}, L_5'=\ell_{70},\;\;
   M_1'=\ell_{8}, M_2'=\ell_{54}, M_3'=\ell_{58}, M_4'=\ell_{63}, M_5'=\ell_{71},$$
   which is contained in the quadric
   $$Q_2: x^2+2xy+\varphi z^2-\left(\varphi - 1\right) w^2 .$$
   The line external to the quadric is $\ell_{17}$.
   As before, the projection of $Z_2$ to $\P^2$ is the intersection of a smooth curve $\Gamma_5'$ of degree $5$ and projection of $6$ lines, for example the lines $ \ell_{7}, \ell_{17}, \ell_{51}, \ell_{60}, \ell_{65}, \ell_{70}$. So $Z_2$ is also a geproci set and a half-grid.
   
   The upshot of these considerations is that $Z$ is contained in a cone of degree $10$ with vertex at $P$, namely the union $C_5 \cup C_5'$ and consequently $\pi_P(Z)$ is a $(6,10)$-complete intersection.
\end{proof}
Now the rest of the proof of Main Theorem follows easily. It remains only to check that $Z$ is not a half-grid. The reason is that there are no lines containing $6$ or more points from $Z$.
\begin{remark}
It is worth pointing out that even if $Z$ is not a half-grid, it is a union of two such sets. In particular, it can be completely covered by a union of $12$ skew $5$-reach lines. \\
Interestingly, there are $84$ different ways to cover $Z$ by $12$ disjoint $5$-reach lines. These covering are indicated in Table \ref{fig:lines_covering}, where this time lines are represented by numbers only.
\end{remark}


\paragraph*{Acknowledgement.}

We would like to thank Piotr Pokora, Tomasz Szemberg and Justyna Szpond for their guidance and valuable comments. This research has been initiated during a workshop held in Lanckorona in Autumn 2020 and continued in Lanckorona in late Spring 2021. 

The first author was partially supported by the National Science Center (Poland) Sonata Grant Nr \textbf{2018/31/D/ST1/00177}.

The second author is partially supported by the Polish Ministry of Science and Higher Education within the program “Najlepsi z najlepszych 4.0” (”The best of the best 4.0”).

Finally, we would like to thank the referee for all of the insightful remarks and recommendations that helped us enhance the article. 



\begin{thebibliography}{10}




\bibitem{bao}
 C. Bao, C. Freidman-Gerlicz, G. Gordon, P. McGrath and J. Vega, Matroid automorphisms of the H4 root system, Congr. Numer. 207 (2011) 141–160. (arXiv:1005.5492v2).

\bibitem{ChiantiniMigliore19}
L. Chiantini and J. Migliore. Sets of points which project to complete intersections, and unexpected cones.Trans. Amer. Math. Soc., 374(4):2581–2607, 2021. With an appendix by A. Bernardi, L.Chiantini, G. Dedham, G. Favacchio, B. Harbourne, J.Migliore, T. Szemberg and J. Szpond.

\bibitem{HMNT}
B. Harbourne, J. Migliore, U. Nagel, and Z. Teitler. Unexpected hypersurfaces and where to find them. Michigan Math. J., 70(2):301–339, 2021.

\bibitem{MFOpreprint}
P. Pokora, T. Szemberg, and J. Szpond. Unexpected properties of the klein configuration of $60$ points in $\mathbb{P}^3$, Research in Pairs 2020, OWP-2020-19, DOI:10.14760/OWP-2020-19.

\end{thebibliography}


\bigskip
\bigskip
\small

\bigskip
\noindent
   Paulina Wi\'sniewska,\\
   Department of Mathematics, Pedagogical University of Cracow,
   Podchor\k a\.zych 2,
   PL-30-084 Krak\'ow, Poland.

\nopagebreak
\noindent
   \textit{E-mail address:} \texttt{paulina.wisniewska@doktorant.up.krakow.pl}\\

\bigskip
\noindent
   Maciej Zi\c{e}ba,\\
Department of Mathematics, Pedagogical University of Cracow,
   Podchor\k a\.zych 2,
   PL-30-084 Krak\'ow, Poland.
   
\nopagebreak
\noindent
   \textit{E-mail address:} \texttt{maciej.zieba@doktorant.up.krakow.pl}\\


\newpage
\section*{Appendix}
Here we present the incidences between points in the configuration and their dual planes.

    
\footnotesize
\begin{longtable}{ l  l }
$V_{1}: 2, 3, 4, 13, 14, 15, 16, 17, 18, 19, 20, 21, 22, 23, 24 $ &
$V_{2}: 1, 3, 4, 25, 26, 27, 28, 29, 30, 31, 32, 33, 34, 35, 36 $ \\
$V_{3}: 1, 2, 4, 37, 38, 39, 40, 41, 42, 43, 44, 45, 46, 47, 48 $ &
$V_{4}: 1, 2, 3, 49, 50, 51, 52, 53, 54, 55, 56, 57, 58, 59, 60 $ \\
$V_{5}: 8, 10, 11, 15, 20, 22, 26, 32, 35, 39, 44, 46, 50, 56, 59 $ &
$V_{6}: 7, 9, 12, 16, 19, 21, 25, 31, 36, 40, 43, 45, 50, 56, 59 $ \\
$V_{7}: 6, 9, 12, 13, 18, 24, 28, 30, 33, 39, 44, 46, 49, 55, 60 $ &
$V_{8}: 5, 10, 11, 14, 17, 23, 27, 29, 34, 40, 43, 45, 49, 55, 60 $ \\
$V_{9}: 6, 7, 12, 14, 17, 23, 26, 32, 35, 37, 42, 48, 52, 54, 57 $ &
$V_{10}: 5, 8, 11, 13, 18, 24, 25, 31, 36, 38, 41, 47, 52, 54, 57 $ \\
$V_{11}: 5, 8, 10, 16, 19, 21, 28, 30, 33, 37, 42, 48, 51, 53, 58 $ &
$V_{12}: 6, 7, 9, 15, 20, 22, 27, 29, 34, 38, 41, 47, 51, 53, 58 $ \\
$V_{13}: 1, 7, 10, 20, 23, 26, 27, 42, 43, 46, 47, 54, 55, 58, 59 $ &
$V_{14}: 1, 8, 9, 19, 24, 25, 28, 41, 44, 45, 48, 54, 55, 58, 59 $ \\
$V_{15}: 1, 5, 12, 18, 21, 25, 28, 42, 43, 46, 47, 53, 56, 57, 60 $ &
$V_{16}: 1, 6, 11, 17, 22, 26, 27, 41, 44, 45, 48, 53, 56, 57, 60 $ \\
$V_{17}: 1, 8, 9, 16, 22, 30, 31, 34, 35, 42, 43, 46, 47, 50, 51 $ &
$V_{18}: 1, 7, 10, 15, 21, 29, 32, 33, 36, 41, 44, 45, 48, 50, 51 $ \\
$V_{19}: 1, 6, 11, 14, 24, 29, 32, 33, 36, 42, 43, 46, 47, 49, 52 $ &
$V_{20}: 1, 5, 12, 13, 23, 30, 31, 34, 35, 41, 44, 45, 48, 49, 52 $ \\
$V_{21}: 1, 6, 11, 15, 18, 30, 31, 34, 35, 38, 39, 54, 55, 58, 59 $ &
$V_{22}: 1, 5, 12, 16, 17, 29, 32, 33, 36, 37, 40, 54, 55, 58, 59 $ \\
$V_{23}: 1, 8, 9, 13, 20, 29, 32, 33, 36, 38, 39, 53, 56, 57, 60 $ &
$V_{24}: 1, 7, 10, 14, 19, 30, 31, 34, 35, 37, 40, 53, 56, 57, 60 $ \\
$V_{25}: 2, 6, 10, 14, 15, 32, 34, 38, 40, 42, 44, 50, 52, 58, 60 $ &
$V_{26}: 2, 5, 9, 13, 16, 31, 33, 37, 39, 41, 43, 50, 52, 58, 60 $ \\
$V_{27}: 2, 8, 12, 13, 16, 30, 36, 38, 40, 42, 44, 49, 51, 57, 59 $ &
$V_{28}: 2, 7, 11, 14, 15, 29, 35, 37, 39, 41, 43, 49, 51, 57, 59 $ \\
$V_{29}: 2, 8, 12, 18, 19, 22, 23, 28, 35, 46, 48, 50, 52, 58, 60 $ &
$V_{30}: 2, 7, 11, 17, 20, 21, 24, 27, 36, 45, 47, 50, 52, 58, 60 $ \\
$V_{31}: 2, 6, 10, 17, 20, 21, 24, 26, 33, 46, 48, 49, 51, 57, 59 $ &
$V_{32}: 2, 5, 9, 18, 19, 22, 23, 25, 34, 45, 47, 49, 51, 57, 59 $ \\
$V_{33}: 2, 7, 11, 18, 19, 22, 23, 26, 31, 38, 40, 42, 44, 54, 56 $ &
$V_{34}: 2, 8, 12, 17, 20, 21, 24, 25, 32, 37, 39, 41, 43, 54, 56 $ \\
$V_{35}: 2, 5, 9, 17, 20, 21, 24, 28, 29, 38, 40, 42, 44, 53, 55 $ &
$V_{36}: 2, 6, 10, 18, 19, 22, 23, 27, 30, 37, 39, 41, 43, 53, 55 $ \\
$V_{37}: 3, 9, 11, 22, 24, 26, 28, 34, 36, 44, 47, 51, 52, 55, 56 $ &
$V_{38}: 3, 10, 12, 21, 23, 25, 27, 33, 35, 43, 48, 51, 52, 55, 56 $ \\
$V_{39}: 3, 5, 7, 21, 23, 26, 28, 34, 36, 42, 45, 49, 50, 53, 54 $ &
$V_{40}: 3, 6, 8, 22, 24, 25, 27, 33, 35, 41, 46, 49, 50, 53, 54 $ \\
$V_{41}: 3, 10, 12, 14, 16, 18, 20, 26, 28, 34, 36, 40, 46, 59, 60 $ &
$V_{42}: 3, 9, 11, 13, 15, 17, 19, 25, 27, 33, 35, 39, 45, 59, 60 $ \\
$V_{43}: 3, 6, 8, 13, 15, 17, 19, 26, 28, 34, 36, 38, 48, 57, 58 $ &
$V_{44}: 3, 5, 7, 14, 16, 18, 20, 25, 27, 33, 35, 37, 47, 57, 58 $ \\
$V_{45}: 3, 6, 8, 14, 16, 18, 20, 30, 32, 39, 42, 51, 52, 55, 56 $ &
$V_{46}: 3, 5, 7, 13, 15, 17, 19, 29, 31, 40, 41, 51, 52, 55, 56 $ \\
$V_{47}: 3, 10, 12, 13, 15, 17, 19, 30, 32, 37, 44, 49, 50, 53, 54 $ &
$V_{48}: 3, 9, 11, 14, 16, 18, 20, 29, 31, 38, 43, 49, 50, 53, 54 $ \\
$V_{49}: 4, 7, 8, 19, 20, 27, 28, 31, 32, 39, 40, 47, 48, 56, 58 $ &
$V_{50}: 4, 5, 6, 17, 18, 25, 26, 29, 30, 39, 40, 47, 48, 55, 57 $ \\
$V_{51}: 4, 11, 12, 17, 18, 27, 28, 31, 32, 37, 38, 45, 46, 54, 60 $ &
$V_{52}: 4, 9, 10, 19, 20, 25, 26, 29, 30, 37, 38, 45, 46, 53, 59 $ \\
$V_{53}: 4, 11, 12, 15, 16, 23, 24, 35, 36, 39, 40, 47, 48, 52, 59 $ &
$V_{54}: 4, 9, 10, 13, 14, 21, 22, 33, 34, 39, 40, 47, 48, 51, 60 $ \\
$V_{55}: 4, 7, 8, 13, 14, 21, 22, 35, 36, 37, 38, 45, 46, 50, 57 $ &
$V_{56}: 4, 5, 6, 15, 16, 23, 24, 33, 34, 37, 38, 45, 46, 49, 58 $ \\
$V_{57}: 4, 9, 10, 15, 16, 23, 24, 27, 28, 31, 32, 43, 44, 50, 55 $ &
$V_{58}: 4, 11, 12, 13, 14, 21, 22, 25, 26, 29, 30, 43, 44, 49, 56 $ \\
$V_{59}: 4, 5, 6, 13, 14, 21, 22, 27, 28, 31, 32, 41, 42, 52, 53 $ &
$V_{60}: 4, 7, 8, 15, 16, 23, 24, 25, 26, 29, 30, 41, 42, 51, 54 $ \\
\caption{Incidences of points and $15$-reach planes}
\label{tab:points_planes_incidences}
\end{longtable}\normalsize


The next table indicates incidences between the configuration points and the $5$-reach lines.
\footnotesize

\begin{longtable}{l l l }
$\ell_{1}: 1,29,32,33,36 $ &
$\ell_{2}: 1,30,31,34,35 $ &
$\ell_{3}: 1,41,44,45,48 $ \\
$\ell_{4}: 1,42,43,46,47 $ &
$\ell_{5}: 1,53,56,57,60 $ &
$\ell_{6}: 1,54,55,58,59 $ \\
$\ell_{7}: 2,17,20,21,24 $ &
$\ell_{8}: 2,18,19,22,23 $ &
$\ell_{9}: 2,37,39,41,43 $ \\
$\ell_{10}: 2,38,40,42,44 $ &
$\ell_{11}: 2,49,51,57,59 $ &
$\ell_{12}: 2,50,52,58,60 $ \\
$\ell_{13}: 3,13,15,17,19 $ &
$\ell_{14}: 3,14,16,18,20 $ &
$\ell_{15}: 3,25,27,33,35 $ \\
$\ell_{16}: 3,26,28,34,36 $ &
$\ell_{17}: 3,49,50,53,54 $ &
$\ell_{18}: 3,51,52,55,56 $ \\
$\ell_{19}: 4,13,14,21,22 $ &
$\ell_{20}: 4,15,16,23,24 $ &
$\ell_{21}: 4,25,26,29,30 $ \\
$\ell_{22}: 4,27,28,31,32 $ &
$\ell_{23}: 4,37,38,45,46 $ &
$\ell_{24}: 4,39,40,47,48 $ \\
$\ell_{25}: 5,13,31,41,52 $ &
$\ell_{26}: 5,16,33,37,58 $ &
$\ell_{27}: 5,17,29,40,55 $ \\
$\ell_{28}: 5,18,25,47,57 $ &
$\ell_{29}: 5,21,28,42,53 $ &
$\ell_{30}: 5,23,34,45,49 $ \\
$\ell_{31}: 6,14,32,42,52 $ &
$\ell_{32}: 6,15,34,38,58 $ &
$\ell_{33}: 6,17,26,48,57 $ \\
$\ell_{34}: 6,18,30,39,55 $ &
$\ell_{35}: 6,22,27,41,53 $ &
$\ell_{36}: 6,24,33,46,49 $ \\
$\ell_{37}: 7,14,35,37,57 $ &
$\ell_{38}: 7,15,29,41,51 $ &
$\ell_{39}: 7,19,31,40,56 $ \\
$\ell_{40}: 7,20,27,47,58 $ &
$\ell_{41}: 7,21,36,45,50 $ &
$\ell_{42}: 7,23,26,42,54 $ \\
$\ell_{43}: 8,13,36,38,57 $ &
$\ell_{44}: 8,16,30,42,51 $ &
$\ell_{45}: 8,19,28,48,58 $ \\
$\ell_{46}: 8,20,32,39,56 $ &
$\ell_{47}: 8,22,35,46,50 $ &
$\ell_{48}: 8,24,25,41,54 $ \\
$\ell_{49}: 9,13,33,39,60 $ &
$\ell_{50}: 9,16,31,43,50 $ &
$\ell_{51}: 9,19,25,45,59 $ \\
$\ell_{52}: 9,20,29,38,53 $ &
$\ell_{53}: 9,22,34,47,51 $ &
$\ell_{54}: 9,24,28,44,55 $ \\
$\ell_{55}: 10,14,34,40,60 $ &
$\ell_{56}: 10,15,32,44,50 $ &
$\ell_{57}: 10,19,30,37,53 $ \\
$\ell_{58}: 10,20,26,46,59 $ &
$\ell_{59}: 10,21,33,48,51 $ &
$\ell_{60}: 10,23,27,43,55 $ \\
$\ell_{61}: 11,14,29,43,49 $ &
$\ell_{62}: 11,15,35,39,59 $ &
$\ell_{63}: 11,17,27,45,60 $ \\
$\ell_{64}: 11,18,31,38,54 $ &
$\ell_{65}: 11,22,26,44,56 $ &
$\ell_{66}: 11,24,36,47,52 $ \\
$\ell_{67}: 12,13,30,44,49 $ &
$\ell_{68}: 12,16,36,40,59 $ &
$\ell_{69}: 12,17,32,37,54 $ \\
$\ell_{70}: 12,18,28,46,60 $ &
$\ell_{71}: 12,21,25,43,56 $ &
$\ell_{72}: 12,23,35,48,52 $ \\
\caption{Incidences of points and $5$-reach line}
\label{tab:points_lines_incidences}
\end{longtable}

    

\begin{table}[]
    \centering
    \setlength{\tabcolsep}{10pt}
\renewcommand{\arraystretch}{1.1}
\begin{tabular}{|l||l|}\hline
1, 7, 17, 24, 25, 32, 37, 44, 51, 60, 65, 70 &
1, 7, 18, 23, 28, 35, 42, 45, 50, 55, 62, 67\\ \hline 
1, 8, 17, 23, 25, 33, 40, 44, 54, 55, 62, 71 & 
1, 8, 17, 24, 25, 32, 37, 44, 54, 58, 63, 71\\ \hline 
1, 8, 18, 23, 29, 33, 40, 48, 50, 55, 62, 67 & 
1, 8, 18, 24, 29, 32, 37, 48, 50, 58, 63, 67\\ \hline
1, 9, 17, 19, 28, 32, 39, 44, 54, 58, 63, 72  &
1, 9, 18, 20, 29, 33, 40, 47, 51, 55, 64, 67 \\ \hline
1, 10, 17, 20, 25, 34, 37, 45, 53, 58, 63, 71 &
1, 10, 18, 19, 30, 33, 40, 48, 50, 57, 62, 70\\ \hline
1, 11, 13, 23, 29, 34, 40, 48, 50, 55, 65, 72 &
1, 11, 14, 24, 25, 32, 42, 47, 54, 57, 63, 71\\ \hline
1, 12, 13, 24, 30, 35, 37, 44, 54, 58, 64, 71 &
1, 12, 14, 23, 29, 33, 39, 48, 53, 60, 62, 67\\ \hline
2, 7, 17, 23, 28, 31, 38, 45, 49, 60, 65, 68 &
2, 7, 17, 24, 26, 31, 38, 43, 51, 60, 65, 70\\ \hline
2, 7, 18, 23, 28, 35, 42, 45, 49, 56, 61, 68 &
2, 7, 18, 24, 26, 35, 42, 43, 51, 56, 61, 70\\ \hline
2, 8, 17, 24, 26, 31, 38, 43, 54, 58, 63, 71 &
2, 8, 18, 23, 29, 33, 40, 48, 49, 56, 61, 68\\ \hline
2, 9, 17, 20, 27, 31, 40, 43, 51, 59, 65, 70 &
2, 9, 18, 19, 28, 36, 42, 45, 52, 56, 63, 68\\ \hline
2, 10, 17, 19, 26, 33, 38, 46, 51, 60, 66, 70 &
2, 10, 18, 20, 28, 35, 41, 45, 49, 58, 61, 69\\ \hline
2, 11, 13, 24, 26, 31, 41, 48, 52, 60, 65, 70 &
2, 11, 14, 23, 27, 35, 42, 45, 49, 56, 66, 71\\ \hline
2, 12, 13, 23, 28, 35, 42, 46, 54, 59, 61, 68 &
2, 12, 14, 24, 29, 36, 38, 43, 51, 60, 65, 69\\ \hline
3, 7, 17, 21, 26, 31, 39, 43, 53, 60, 62, 70 &
3, 7, 18, 22, 28, 32, 42, 47, 49, 57, 61, 68\\ \hline
3, 8, 17, 22, 27, 32, 37, 44, 49, 58, 66, 71 &
3, 8, 18, 21, 29, 36, 40, 43, 50, 55, 62, 69\\ \hline
3, 11, 13, 22, 26, 34, 42, 47, 52, 55, 66, 71 &
3, 11, 14, 21, 29, 32, 39, 47, 49, 60, 66, 69\\ \hline
3, 11, 14, 22, 27, 32, 42, 47, 49, 57, 66, 71 &
3, 11, 15, 19, 27, 32, 42, 46, 50, 57, 66, 70\\ \hline
3, 11, 16, 20, 27, 31, 40, 47, 49, 57, 64, 71 &
3, 12, 13, 21, 29, 36, 37, 46, 53, 60, 64, 68\\ \hline
3, 12, 14, 21, 29, 36, 39, 43, 53, 60, 62, 69 &
3, 12, 14, 22, 27, 36, 42, 43, 53, 57, 62, 71\\ \hline
3, 12, 15, 20, 29, 34, 39, 43, 53, 58, 61, 69 &
3, 12, 16, 19, 28, 36, 39, 44, 52, 60, 62, 69\\ \hline
4, 7, 17, 22, 26, 34, 38, 43, 51, 55, 65, 72 &
4, 7, 18, 21, 30, 35, 37, 45, 49, 56, 64, 68\\ \hline
4, 8, 17, 21, 25, 32, 37, 46, 54, 59, 63, 68 &
4, 8, 18, 22, 26, 33, 41, 48, 52, 55, 62, 67\\ \hline
4, 11, 13, 21, 26, 35, 41, 46, 54, 55, 64, 72 &
4, 11, 13, 22, 26, 34, 41, 48, 52, 55, 65, 72\\ \hline
4, 11, 14, 22, 27, 32, 41, 48, 49, 57, 65, 72 &
4, 11, 15, 20, 25, 34, 41, 45, 52, 55, 65, 69\\ \hline
4, 11, 16, 19, 26, 34, 39, 48, 52, 56, 63, 72 &
4, 12, 13, 21, 30, 35, 37, 46, 54, 59, 64, 68\\ \hline
4, 12, 13, 22, 30, 34, 37, 48, 52, 59, 65, 68 &
4, 12, 14, 21, 30, 35, 39, 43, 54, 59, 62, 69\\ \hline
4, 12, 15, 19, 30, 33, 38, 46, 54, 57, 64, 68 &
4, 12, 16, 20, 27, 35, 37, 46, 51, 59, 64, 67\\ \hline
5, 7, 15, 23, 25, 34, 42, 45, 53, 56, 61, 68 &
5, 7, 16, 24, 26, 31, 38, 47, 51, 60, 64, 67\\ \hline
5, 8, 15, 24, 25, 32, 41, 44, 54, 58, 61, 69 &
5, 8, 16, 23, 27, 31, 40, 48, 50, 59, 62, 67\\ \hline
5, 9, 13, 21, 30, 31, 40, 47, 54, 59, 64, 68 &
5, 9, 14, 22, 27, 32, 42, 47, 51, 59, 66, 67\\ \hline
5, 9, 15, 20, 27, 31, 41, 45, 53, 58, 64, 67 &
5, 9, 16, 19, 27, 36, 40, 44, 51, 56, 64, 72\\ \hline
5, 9, 16, 20, 27, 31, 40, 47, 51, 59, 64, 67 &
5, 10, 13, 22, 26, 34, 41, 48, 53, 58, 61, 72\\ \hline
5, 10, 14, 21, 25, 36, 41, 45, 53, 60, 62, 69 &
5, 10, 15, 19, 30, 34, 38, 45, 50, 58, 66, 69\\ \hline
5, 10, 15, 20, 25, 34, 41, 45, 53, 58, 61, 69 &
5, 10, 16, 20, 25, 34, 40, 47, 51, 59, 61, 69\\ \hline
6, 7, 15, 24, 30, 31, 38, 43, 50, 57, 65, 70 &
6, 7, 16, 23, 28, 35, 39, 44, 49, 56, 61, 72\\ \hline
6, 8, 15, 23, 29, 33, 38, 46, 50, 55, 66, 67 &
6, 8, 16, 24, 25, 36, 37, 44, 52, 56, 63, 71\\ \hline
6, 9, 13, 22, 28, 36, 41, 44, 52, 55, 65, 72 &
6, 9, 14, 21, 29, 36, 39, 43, 53, 56, 63, 72\\ \hline
6, 9, 15, 19, 30, 33, 39, 44, 52, 56, 66, 70 &
6, 9, 16, 19, 28, 36, 39, 44, 52, 56, 63, 72\\ \hline
6, 9, 16, 20, 28, 31, 39, 47, 52, 59, 63, 67 &
6, 10, 13, 21, 30, 35, 37, 46, 50, 59, 66, 70\\ \hline
6, 10, 14, 22, 30, 33, 38, 47, 49, 57, 66, 71 &
6, 10, 15, 19, 30, 33, 38, 46, 50, 57, 66, 70\\ \hline
6, 10, 15, 20, 25, 33, 41, 46, 53, 57, 61, 70 &
6, 10, 16, 19, 28, 36, 38, 46, 50, 57, 63, 72\\ \hline
\end{tabular}

    \caption{Sets of disjoint lines whose unions contain every point of the $\mathrm{H}_{4}$ configuration}
    \label{fig:lines_covering}
\end{table}

\end{document}